\def\sc{\scriptstyle}
\def\ds{\displaystyle}
\def\cc{\mbox{cc}}
\def\rw{\mbox{rw}}
\newtheorem{thm}{Theorem}[section]
\newtheorem{lem}[thm]{Lemma}
\newtheorem{cor}[thm]{Corollary}
\theoremstyle{definition}
\newtheorem{exa}[thm]{Example}
\title[Cyclic sieving for a family of semistandard tableaux]%
{Cyclic sieving for a family of semistandard tableaux}
\author{Joshua Basman Monterrubio, Graeme Henrickson \ \ \ \ \ \ \ \ \ \ \ \ \ \ \ \  and
Anna Stokke}
\address{University of Winnipeg \\
Department of Mathematics and Statistics \\
Winnipeg, Manitoba \\
Canada  R3B 2E9}
\email{\tt a.stokke@uwinnipeg.ca}
\thanks{This research was supported by NSERC grant RGPIN-2018-05877.}
\begin{document}

\begin{abstract}
We give a new cyclic sieving phenomenon for semistandard Young tableaux $SSYT(\lambda,\mu)$ of shape $\lambda=(m,n^b)$ and content $\mu$, a $(b+2)$-tuple.  We prove that $(SSYT(\lambda,\mu),\langle \partial^{b+2} \rangle, f(q))$ exhibits the cyclic sieving phenomenon, where $\partial$ is the jeu de taquin promotion operator and $f(q)$ is a modified Kostka-Foulkes polynomial $\widetilde{K}_{\lambda,\mu}(q)$, up to a power of $q$. 

 \end{abstract}

\keywords{Young tableau, jeu-de-taquin, cyclic sieving phenomenon}

\maketitle

\section{Introduction}

Given a finite set $X$ and a cyclic group $\langle g \rangle$ of order $n$ that acts on $X$, we can consider the cardinality of the fixed point set $X^{g^d}$, for a positive integer $d$.    The triple $(X, \langle g \rangle,f(q))$, where $f(q) \in \mathbb{N}[q]$, is said to exhibit the cyclic sieving phenomenon (CSP) if $\vert X^{g^d} \vert =f(\omega^d)$ for all $d\geq 0$, where $\omega$ is a primitive $n$th root of unity.  The cyclic sieving phenomenon was introduced by Reiner, Stanton and White in 2004 \cite{rsw} and has been widely studied since then, in various settings (see \cite{sagan} for details).

Several authors have produced CSPs for various sets of Young tableaux (see, for instance, \cite{linusson, bms, fontaine, gaetz, ohpark1, ohpark2, pechenik, psv, rhoades}).  Candidates for cyclic sieving polynomials are generally $q$-analogues of a natural counting formula (for example, the hook-length formula for standard tableaux) and a cyclic action on standard or semistandard tableaux is given by Sch\"utzenberger's jeu-de-taquin promotion operator $\partial$ (\cite{schutz1,schutz2}).  One roadblock is that the order of promotion (the least positive integer that fixes all tableaux in the set under $\partial$) is unknown for most shapes.  There are also situations where the order of promotion is known but the most natural cyclic sieving polynomial does not yield a CSP.  For example, the order of promotion for staircase tableaux was given in \cite{ponwang} but, so far, a CSP for staircase tableaux remains elusive.  

For standard rectangular tableaux of shape $\lambda=(a^b)$, the order of promotion is $ab$ \cite{haiman} and if $X=SSYT(a^b,k)$ is the set of semistandard rectangular tableaux with entries less than or equal to $k$, the order of promotion on $X$ is $k$.   Rhoades proved CSPs  for both standard and semistandard rectangular tableaux \cite{rhoades}.  We give a summary of CSP results for semistandard tableaux thus far.  For a list of CSPs in other settings, see \cite[Table 1]{linusson}. 

\bigskip

\noindent (1) Rhoades \cite{rhoades} proved that $(SSYT(\lambda,k),\langle \partial \rangle, q^{-\kappa(\lambda)}s_{\lambda}(1,q,\ldots,q^{k-1}))$ is a CSP triple, where  $\lambda$ is a rectangular partition, $s_{\lambda}(1,q,\ldots,q^{k-1})$ is a principal specialization of the Schur polynomial and $\kappa(\lambda)=\sum_i(i-1)\lambda_i$. 
\bigskip

\noindent (2) In \cite{fontaine}, the authors showed that $(SSYT(a^b,\gamma), \langle \partial^d \rangle, q^*K_{a^b,\gamma}(q))$ is a CSP triple, refining Rhoades's result.  Here $SSYT(a^b,\gamma)$ is the set of rectangular tableaux with fixed content $\gamma$, with $\gamma$ invariant under the $d$th cyclic shift, where $d$ is the frequency of $\gamma$---the number of cyclic shifts to return $\gamma$ to itself---and $q^*K_{a^b,\gamma}(q)$ is a Kostka-Foulkes polynomial up to a power of $q$.

\bigskip

\noindent (3) A CSP for semistandard hook tableaux  with content $\mu$  was given in \cite{bms} where it is shown that $(SSYT((n-m,1^m),\mu), \langle \partial^d \rangle, f(q))$
 is a CSP triple, with cyclic sieving polynomial $f(q)=\left[ \begin{array}{c}nz(\mu)-1 \cr  m \cr \end{array}\right]_q.$  Here $z(\mu)$ is the number of non-zero entries in $\mu$.

\bigskip

\noindent (4) Using the cyclic action $\tt{c}$ arising from the $U_q(\mathfrak{sl}_n)$ crystal structure for semistandard tableaux, Oh and Park \cite{ohpark1} proved
$(SSYT(\lambda),\langle \mathtt{c} \rangle, q^{-\kappa(\lambda)}s_\lambda(1,q,\ldots,q^{k-1}))$
exhibits the CSP when the length of $\lambda$ is less than $k$ and $\mbox{gcd}(k,\vert \lambda \vert)=1$.  The result was extended to skew shapes in \cite{alexander}.

\bigskip

\noindent (5)  In \cite{linusson}, the authors gave a CSP for semistandard tableaux of stretched hook shape $\lambda=((a+1)n,n^b)$  and rectangular content  $\mu=(n^{a+b+1})$.  They proved that 
$(SSYT((a+1)n,n^b),\mu), \langle \partial \rangle, f(q))$ exhibits the CSP, where $$f(q)=\prod_{1 \leq i \leq a}\prod_{1 \leq j \leq b} \frac{[i+j+n-1]_q}{[i+j-1]_q}=q^{-n\binom{b+1}{2}}\widetilde{K}_{\lambda,\mu}(q).$$ Here $\widetilde{K}_{\lambda,\mu}(q)$ is a modified Kostka-Foulkes polynomial.

In this paper, we give a CSP for the set of semistandard tableaux $SSYT(\lambda,\mu)$ of shape $\lambda=(m,n^b)$ and content $\mu=(\mu_1,\ldots,\mu_{b+2})$, where $m, n, b$ are positive integers.   The shape is a more general version of the stretched hook shape $\lambda=((a+1)n,n^b)$ in (5) and our content is a $(b+2)$-tuple whereas the content in (5) is rectangular of the form $(n^{a+b+1})$.  The CSP polynomial is a $q$-binomial coefficient, which is a modified Kostka-Foulkes polynomial.  Our CSP coincides with (5) in the case where $a=1$; that is when $\lambda=(2n,n^b)$ and $\mu=(n^{b+2})$.  

After reviewing the necessary definitions and results in Sections \ref{sec:prelim1} and \ref{sec:prelim2} we prove our main result in Section \ref{sec:main}, which is that  $(SSYT(\lambda,\mu), \langle \partial^{b+2} \rangle, q^{-n\binom{b+1}{2}}\widetilde{K}_{\lambda,\mu}(q))$ is a CSP, for $\lambda=(m,n^b)$, $\mu=(\mu_1,\ldots,\mu_{b+2})$, and  $\widetilde{K}_{\lambda,\mu}(q)$ a modified Kostka-Foulkes polynomial.

\section{Semistandard tableaux and jeu de taquin promotion}\label{sec:prelim1}

A weakly decreasing $r$-tuple $\lambda=(\lambda_1,\ldots,\lambda_r)$ is a partition of a positive integer $n$ if $\lambda_i \geq 0$ and $\sum_{i=1}^r \lambda_i=n$.  The Young diagram of shape $\lambda$ consists of $n$ boxes in $r$ left-justified rows with the $i$th row containing $\lambda_i$ boxes.  A $\lambda$-tableau $T$ is obtained by filling the Young diagram with positive integers.  A $\lambda$-tableau is {\em semistandard} if the entries in its columns are strictly increasing from top to bottom and the entries in its rows are weakly increasing from left to right.  If $T$ contains entries from the set $\{1, \ldots, k\}$, the {\em content} of $T$ is the $k$-tuple $\mu=(\mu_1,\ldots,\mu_k)$ where $\mu_i$ is equal to the number of entries equal to $i$ in $T$.  We will denote the set of semistandard $\lambda$-tableau with content $\mu$ by $SSYT(\lambda, \mu)$.  

\begin{exa} \label{firstexa} The tableau $T=\begin{ytableau}1& 1 & 2 & 3 & 5 \cr 2 & 3 & 4 \cr 3 \cr \end{ytableau}$ belongs to $SSYT(\lambda,\mu)$ where $\lambda=(5,3,1)$ and $\mu=(2,2,3,1,1).$

\end{exa}

{\em Jeu-de-taquin promotion} (\cite{schutz1,schutz2}) is a combinatorial algorithm that gives an action on semistandard tableaux.  We will use the version defined in \cite{bms}, which is the inverse of the operation used in \cite{linusson}.  For a semistandard tableau $T$ with entries in $\{1,\ldots,k\}$, first  replace each entry equal to $k$ with a dot.  If there is a dot in the figure that is not contained in a continuous strip of dots in the northwest corner, choose the westernmost dot and slide it north or west until it lands in a connected component of dots in the northwest corner according to the following rules:  
$$ \begin{ytableau} a & b \cr  \bullet \cr \end{ytableau} \rightarrow \begin{ytableau} \bullet & b \cr a \cr \end{ytableau}\ ; \ \ \begin{ytableau} a & \bullet \cr b   \cr\end{ytableau} \rightarrow \begin{ytableau} \bullet & a \cr b \cr \end{ytableau}\ ; \ \ \begin{ytableau} a & b \cr c & \bullet \cr \end{ytableau} \rightarrow
\left\{\begin{array}{ll}
\begin{ytableau} a & \bullet \cr c & b \cr\end{ytableau} & \text{if $c \leq b$} \\ \\
\begin{ytableau} a & b \cr \bullet & c \cr \end{ytableau} & \text{if $b<c$.} 
\end{array}\right. $$

Repeat for the remaining dots, then replace each dot with $1$ and increase all other entries by one, giving $\partial(T)$, which is semistandard.  If $T$ has content $\mu=(\mu_1,\ldots,\mu_ k)$, then $\partial(T)$ has content $(\mu_k,\mu_1,\ldots,\mu_{k-1})$.

\begin{exa} Below is an illustration of jeu-de-taquin promotion.
$$T=\begin{ytableau} 1 & 1 & 2 & 3 \cr 2 & 3 & 4 & 5 \cr 5 & 5 \cr\end{ytableau} \rightarrow \begin{ytableau} 1 & 1 & 2 & 3 \cr 2 & 3 & 4 & \bullet \cr \bullet & \bullet \cr \end{ytableau} \rightarrow \begin{ytableau} \bullet & 1 & 2 & 3 \cr 1 & 3 & 4 & \bullet \cr 2 & \bullet \cr \end{ytableau} \rightarrow \begin{ytableau}\bullet & \bullet & 2 & 3 \cr 1 & 1 & 4 & \bullet \cr 2 & 3 \cr \end{ytableau}$$
$$ \rightarrow \begin{ytableau} \bullet & \bullet & \bullet & 3 \cr 1 & 1 & 2 & 4 \cr 2 & 3 \cr \end{ytableau} \rightarrow \begin{ytableau} 1 & 1 & 1 & 4 \cr 2 & 2 & 3 & 5 \cr 3 & 4 \cr \end{ytableau} =\partial(T)$$

\end{exa}

The {\em order of promotion} of a tableau $T$ is the least positive integer $r$ such that $\partial^r(T)=T$.  If a set $X$ of semistandard tableaux is invariant under $\partial$, the least positive integer $r$ such that $\partial^r(T)=T$ for all $T \in X$ is the {\em order of promotion} on $X$.

\section{Kostka-Foulkes polynomials}\label{sec:prelim2}
The \textit{Kostka-Foulkes polynomials}, denoted $K_{\lambda, \mu}(q)$, relate Hall-Littlewood polynomials to Schur polynomials (see \cite{butler} for a comprehensive overview).  They generalize the Kostka coefficients $K_{\lambda \mu}$, since $K_{\lambda,\mu}(1)=K_{\lambda \mu}$, which is the number of semistandard tableaux of shape $\lambda$ and content $\mu$.  It was shown by Lascoux and Sch\"utzenberger \cite{lascoux} that the Kostka-Foulkes polynomials can be found using a statistic, called {\em charge}, which had previously been conjectured by Foulkes \cite{foulkes}:
$$K_{\lambda,\mu}(q)=\sum_{T \in SSYT(\lambda,\mu)}q^{charge(T)}.$$

We will work with \textit{modified Kostka-Foulkes polynomials} $\widetilde{K}_{\lambda,\mu}(q)$, which are related to the Kostka-Foulkes polynomials by the relation  $\widetilde{K}_{\lambda,\mu}(q)=q^{\kappa(\mu)}K_{\lambda,\mu}(q^{-1})$, where $\kappa(\mu)=\sum_i(i-1)\mu_i$.  These can be obtained via a statistic on tableaux called {\em cocharge}, denoted $cc(T)$, which we will define shortly: $$\widetilde{K}_{\lambda, \mu}(q)=\sum_{T \in SSYT(\lambda,\mu)}q^{cc(T)}.$$

Given a permutation $w=w_1\ldots w_n \in \mathfrak{S}_n$, where $\mathfrak{S}_n$ is the symmetric group on $n$ letters, define the {\em cocharge} of $j$ in $w$ recursively as follows:
\begin{equation*}
 cc(w,j) := \left\{
        \begin{array}{ll}
            0 & \quad \text{if} \,  j=1 \\
           cc(w,j-1)+1 & \quad \text{if $j$ precedes $j-1$ in} \,  w \\
            cc(w,j-1)  & \quad \text{otherwise}.
        \end{array}
    \right.
\end{equation*}
The cocharge of the word $w$ is  $cc(w)=\sum_{j=1}^n cc(w,j)$ and $charge(w)=\binom{n}{2} - cc(w)$. 
The content of a word $w$ is $\mu=(\mu_1,\ldots,\mu_n)$, where $\mu_i$ records the number of entries $i$ in $w$.  We can define cocharge for a word $w$ whenever its content $\mu$ is a partition.   To do so, obtain $\mu_1$ standard subwords from $w$ in the following way: start by selecting the rightmost $1$, then move left to find the rightmost $2$ that precedes the chosen $1$ and if there is not a $2$ preceding the $1$, loop around to the beginning of the word to choose the rightmost $2$.  Continue for $3,4,$ etc., until the largest entry in the word has been selected.  The selected entries, listed in the order they appear in $w$, form the first standard subword $w^{(1)}$.  Delete the entries in $w^{(1)}$ from $w$ and repeat the process with the word consisting of the remaining entries to obtain $w^{(2)}$.  Continue until no entries in the word remain, forming $\mu_1$ subwords.  Each of the subwords $w^{(i)}$ is a permutation, and $w^{(1)}, \ldots,w^{(k)}$ are the parts of the conjugate partition $\mu^t$.  Define the cocharge of $w$ as $cc(w)=\sum_{i=1}^{\mu_1} cc(w^{(i)})$.  

To get the cocharge of a tableau, we work with its reading word  $\mbox{rw}(T)$, which is obtained by listing the entries of $T$, left to right, across the rows, starting with the bottom row of $T$.  Define
 $cc(T)=cc(\rw(T))$.  For the cocharge of $T$ to be well-defined, it is necessary for the content of $T$ to be a partition.  However, $\widetilde{K}_{\lambda,\mu}=\widetilde{K}_{\lambda,\sigma \mu}$, where $\sigma$ is a permutation and $\sigma (\mu_1,\ldots,\mu_n)=(\mu_{\sigma(1)}, \ldots, \mu_{\sigma(n)})$ so this does not impede the use of cocharge to find the modified Kostka-Foulkes polynomial.
 
 \begin{exa} Let $T=\begin{ytableau} 1 & 1 & 1 & 1 & 2 & 2 & 3 & 4 \cr 2 & 2 & 3 \cr 3 & 4 & 4 \cr \end{ytableau}$ with $\rw(T)=34422311112234$.  The content of $w$, which is the content of $T$, is $\mu=(4,4,3,3)$.  The four standard subwords obtained from $w$ are:
 $w^{(1)}=3214, \ w^{(2)}=4213, \ w^{(3)}=4312, \ w^{(4)}=12$.  Then $cc(w^{(1)})=cc(w^{(1)},1)+cc(w^{(1)},2)+cc(w^{(1)},3)+cc(w^{(1)},4)=0+1+2+2=5$, $cc(w^{(2)})=0+1+1+2=4, cc(w^{(3)})=0+0+1+2=3,
 cc(w^{(4)})=0+0=0$ so $cc(w)=12$. \end{exa}

\section{Main result}\label{sec:main}

Our aim in this section is to prove a CSP for semistandard tableaux of shape $\lambda=(m,n^b)$ and content $\mu=(\mu_1,\ldots,\mu_{b+2})$.  For $\lambda$ and $\mu$ so defined, let $$\displaystyle \beta(\lambda,\mu)=m-n-\sum_{i=1}^{b+2}\gamma_i, \mbox{ where }
\gamma_i= \begin{cases} 
      \mu_i-n &\mbox{if } \mu_i>n \\
      0 & \text{otherwise}.
      \end{cases} $$

If $\mu_i>n$, at least $\gamma_i=\mu_i-n$ entries equal to $i$ are forced into the last $m-n$ columns of $T\in SSYT(\lambda,\mu)$ and we will refer to these as {\em forced entries}.  Thus $\sum_{i=1}^{b+2}\gamma_i$ is the number of entries in the last $m-n$ columns that are fixed and there are $m-n-\sum_{i=1}^{b+2} \gamma_i$ boxes in the last $m-n$ columns for which the entries can vary.  The entries remaining in the last $m-n$ columns of $T$ after deleting $\gamma_i$ entries equal to $i$, for each $1 \leq i \leq b+2$, will be called the {\em free entries} in $T$. Each tableau $T \in SSYT(\lambda,\mu)$ has $\beta(\lambda,\mu)$ free entries, which belong to the set $\{2,\ldots,b+2\}$.  Furthermore, since the sum $\sum_{i=1}^{b+2}\gamma_i$  is the same for any permutation $\sigma \mu$ of the content, any tableau in $SSYT(\lambda,\sigma \mu)$ also has $\beta(\lambda,\mu)$ free entries.

The free entries in $T \in SSYT(\lambda,\mu)$ can also be determined by considering a multiset of elements from $\{2,\ldots,b+2\}$ that are missing from the first $n$ columns.  Each of the first $n$ columns of $T$ is necessarily missing one element from $\{1,\ldots,b+2\}$ and the collection of these elements forms a multiset.  If $\mu_i <n$, then $i$ is missing from at least $n-\mu_i$ of the first $n$ columns in any tableau, so for each $i$ in the multiset with $\mu_i<n$, remove $n-\mu_i$ entries equal to $i$   to get a multiset $A_T$ of elements from $\{2,\ldots,b+2\}$.    The set $A_T$ consists precisely of the free elements in $T$ so $\beta(\lambda,\mu)=\vert A_T \vert=n-\sum_{\mu_i<n} (n-\mu_i)$.  Let $\mathcal{A}$ denote the set of $\beta(\lambda,\mu)$-element multisets of $\{2,\ldots,b+2\}$ and define a map $$\phi:SSYT(\lambda,\mu) \rightarrow \mathcal{A} \mbox{ where } \phi(T)=A_T.$$

Since $\vert A_T \vert \leq n$, for $T \in SSYT(\lambda,\mu)$, the following lemma is immediate.

\begin{lem}\label{betalem}
Suppose that $\lambda=(m,n^b)$ and  $\mu=(\mu_1,\ldots,\mu_{b+2})$.  Then $\beta(\lambda,\mu) \leq n$.
\end{lem}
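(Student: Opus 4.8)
The plan is to prove the inequality directly, using only that $\mu$ is the content of a tableau of shape $\lambda=(m,n^b)$, so that $\sum_{i=1}^{b+2}\mu_i=m+nb$.

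The quickest route is the one already implicit in the paragraph preceding the statement. Fix any $T\in SSYT(\lambda,\mu)$, which is nonempty since $\mu$ is assumed to be the content of a tableau of shape $\lambda$. Because $\lambda=(m,n^b)$, each of the first $n$ columns of $T$ consists of exactly $b+1$ cells filled with a strictly increasing sequence from $\{1,\dots,b+2\}$, and hence omits exactly one element of $\{1,\dots,b+2\}$. Collecting these omitted elements over the first $n$ columns produces a multiset $M$ of size exactly $n$, and the multiset $A_T$ of free elements is obtained from $M$ by deleting elements; thus $A_T\subseteq M$ and $\beta(\lambda,\mu)=\vert A_T\vert\le\vert M\vert=n$.

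Alternatively, and in a way that is self-contained and sidesteps the (slightly fussy) verification that $A_T$ is well defined, I would argue purely arithmetically from the definition of $\beta$. Set $S=\{\,i:\mu_i>n\,\}$ and $s=\vert S\vert$. Then $\sum_{i=1}^{b+2}\gamma_i=\sum_{i\in S}(\mu_i-n)=\bigl(\sum_{i\in S}\mu_i\bigr)-sn$, and substituting $\sum_{i\in S}\mu_i=(m+nb)-\sum_{i\notin S}\mu_i$ into $\beta(\lambda,\mu)=m-n-\sum_{i=1}^{b+2}\gamma_i$ collapses it to $\beta(\lambda,\mu)=\sum_{i\notin S}\mu_i-n(b+1-s)$. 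Since $\mu_i\le n$ for each of the $b+2-s$ indices $i\notin S$, we get $\sum_{i\notin S}\mu_i\le n(b+2-s)$, and hence $\beta(\lambda,\mu)\le n$.

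There is no real obstacle in this lemma: it is a bookkeeping fact whose content is essentially unpacked in the discussion above its statement. The only point worth stating carefully is the standing assumption that $\mu$ is the content of some $\lambda$-tableau (equivalently $\vert\mu\vert=\vert\lambda\vert$) — without it, $\beta(\lambda,\mu)$ can be made arbitrarily large by taking $m$ large — and I would include the two-line computation of the previous paragraph so that the inequality is transparent regardless of whether $SSYT(\lambda,\mu)$ happens to be empty.
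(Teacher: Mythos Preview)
Your first argument is exactly the paper's: the sentence preceding the lemma observes that $A_T$ is obtained by deleting entries from the $n$-element multiset of ``missing'' elements in the first $n$ columns, so $\beta(\lambda,\mu)=|A_T|\le n$; the paper states the lemma as immediate from this. Your second, arithmetic argument is a genuine (and clean) alternative: rewriting $\beta(\lambda,\mu)=\sum_{i\notin S}\mu_i-n(b+1-s)$ and bounding the sum by $n(b+2-s)$ gives the inequality without ever invoking a tableau, and in particular works even when $SSYT(\lambda,\mu)=\varnothing$. The paper's route is shorter given the surrounding discussion, while yours is self-contained and makes the hidden hypothesis $|\mu|=|\lambda|$ explicit; your closing remark that the bound fails without this hypothesis is a worthwhile observation that the paper leaves implicit.
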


\begin{exa} Let $T  =\ \   \begin{ytableau}
    1 & 1 & 1 & 1 & 1 & {\bf 1} & 2 & 4 & 4& {\bf 4} & {\bf 4} & {\bf 6}  \\
    2 & 2 & 2 & 3 & 3 \\
    3 & 3 & 4 & 4 & 4  \\
    5 & 5 & 5 & 5 & 5 \\
    6 & 6 & 6 & 6 & 6 \\
    \end{ytableau}\ ,$

\noindent where $\lambda=(12,5^4)$ and $\mu=(6,4,4,7,5,6)$.  Then $\gamma_1=\gamma_6=1,\gamma_4=2, \gamma_2=\gamma_3=\gamma_5=0$ and $\beta(\lambda,\mu)=3$. Entries corresponding to $\gamma_1,\gamma_4,\gamma_6$, which are forced into the top row, are boldfaced in the tableau.  The missing elements from the first five columns of $T$ are $\{4,4,3,2,2\}$.  Since $\mu_2, \mu_3<5$ and $n-\mu_2=n-\mu_3=1$, we remove both a $3$ and a $2$ to get $\phi(T)=A_T=\{4,4,2\}$. These are the free entries in the arm of the first row, which are not boldfaced.  \end{exa}

\noindent We will use the following straightforward fact in the proofs that follow.
\begin{lem}  Suppose that $T$ is a semistandard tableau of shape $\lambda=(m,n^b)$ and content $\mu=(\mu_1,\ldots,\mu_{b+2})$.  Then any row $i$ of $T$, where $i \geq 2$, can contain only the entries $i$ or $i+1$. 
\end{lem}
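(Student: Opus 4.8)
The plan is to read the claim straight off the column-strictness of a semistandard tableau, using the fact that for $i\ge 2$ every box of row $i$ sits in a full-length column.

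First I would note that $\lambda=(m,n^b)$ has exactly $b+1$ rows, that row $i$ has length $n$ for every $i$ with $2\le i\le b+1$, and hence that any box in row $i$ (for $i\ge 2$) lies in some column $j$ with $1\le j\le n$. Each such column has $b+1$ boxes, one in each of rows $1,\dots,b+1$, and since $\mu$ is a $(b+2)$-tuple, all entries of $T$ lie in $\{1,\dots,b+2\}$. Fix one such column $j$ and write its entries from top to bottom as $a_1<a_2<\cdots<a_{b+1}$, a strictly increasing sequence in $\{1,\dots,b+2\}$ by semistandardness.

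From $a_1\ge 1$ and strict increase, reading downward, we get $a_i\ge i$; from $a_{b+1}\le b+2$ and strict increase, reading upward, we get $a_i\le (b+2)-(b+1-i)=i+1$. Therefore the entry $a_i$ in row $i$ of column $j$ lies in $\{i,i+1\}$, and since $j$ was an arbitrary column meeting row $i$, every entry of row $i$ is $i$ or $i+1$.

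I do not anticipate a genuine obstacle here; the only point requiring care is the role of the hypothesis $i\ge 2$, which is exactly what guarantees that the column through the chosen box reaches all the way down to row $b+1$ (boxes in the arm of row $1$, columns $n+1,\dots,m$, lie in height-$1$ columns and can carry much larger entries, so the statement genuinely fails for $i=1$). One also uses, implicitly but essentially, that entries never exceed $b+2$ because $\mu$ has $b+2$ parts.
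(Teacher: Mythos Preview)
Your argument is correct and is exactly the kind of reasoning the paper has in mind: the lemma is stated there as a ``straightforward fact'' with no proof, and your column-strictness bound $i\le a_i\le i+1$ on the first $n$ columns is the natural one-line justification. Your remark isolating why $i\ge 2$ is needed (so that the box lies in a full-height column rather than in the arm of the first row) is also on point.
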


\begin{lem} \label{lem:sigma} Suppose that $\lambda=(m,n^b)$, $\mu=(\mu_1,\ldots,\mu_{b+2})$ and that $T \in SSYT(\lambda,\mu)$. Let $\sigma=(2,3\ldots,b+2) \in \mathfrak{S}_{b+2}$.  Then $\phi(\partial(T))=\sigma \phi(T)$. \end{lem}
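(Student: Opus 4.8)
The plan is to compare the multisets $\phi(\partial T)$ and $\sigma\phi(T)$ element by element. For a semistandard tableau $S$ of shape $\lambda$, let $d_i(S)$ be the number of the first $n$ columns of $S$ that do not contain the value $i$. From the description of $A_S$ preceding the lemma, the multiplicity of $i$ in $\phi(S)=A_S$ equals $d_i(S)-\max(n-\nu_i,0)$, where $\nu$ is the content of $S$, and this multiplicity is $0$ when $i=1$. Since $\partial T$ has content $(\mu_{b+2},\mu_1,\dots,\mu_{b+1})$, whose $i$th coordinate is $\mu_{i-1}$ for $i\ge 2$, a direct comparison of multiplicities shows that $\phi(\partial T)=\sigma\phi(T)$ will follow from
\[
d_i(\partial T)=d_{i-1}(T)\qquad (3\le i\le b+2).
\]
Indeed this gives equality of the multiplicities at $3,\dots,b+2$; the multiplicities at $1$ are both $0$; and the multiplicities at $2$ then agree because $\phi(\partial T)$ and $\sigma\phi(T)$ have the same cardinality $\beta(\lambda,\mu)$ (the quantity $\beta$ depends only on the multiset $\{\mu_1,\dots,\mu_{b+2}\}$, hence is invariant under a cyclic shift of the content).

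Next I would record what $\partial$ does structurally for this shape. Every entry of $T$ equal to $b+2$ lies in row $1$, and then necessarily in the arm — for a row-$1$ entry in one of the first $n$ columns is strictly below a row-$2$ entry, which is a $2$ or a $3$ by the previous lemma — or in row $b+1$. When the dots have all been slid northwest they form a connected region containing the corner that meets each column in at most one cell (otherwise $\partial T$, which is semistandard, would have two $1$'s in one column), so they are exactly the cells $(1,1),\dots,(1,\mu_{b+2})$ of the top row. Let $U$ be the semistandard skew tableau of shape $\lambda/(\mu_{b+2})$ formed by the surviving entries, so that $\partial T$ is obtained from $U$ by adding $1$ to every entry and inserting $1$'s in $(1,1),\dots,(1,\mu_{b+2})$. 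Then for $2\le i\le b+2$ a column $j\le n$ of $\partial T$ misses the value $i$ precisely when column $j$ of $U$ misses the value $i-1$; hence $d_i(\partial T)$ equals the number of the first $n$ columns of $U$ that miss $i-1$, and the displayed identity reduces to the claim $(\star)$: for every $l\in\{2,\dots,b+1\}$, $U$ and $T$ have the same number of first-$n$ columns missing $l$.

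To prove $(\star)$ I would follow the jeu-de-taquin slides producing $U$ one elementary move at a time. Such a move either slides a dot north, which merely permutes entries within a single column and changes no column's contents, or slides a dot west, carrying one entry from a column $c-1$ into column $c$; since the non-dot part stays column-strict throughout, the moved value differs from every value then in column $c$, so column $c$ gains that value while column $c-1$ loses it, and the number of columns containing (equivalently, missing) it is unchanged. Therefore the count in $(\star)$ for a value $l$ can only change at a move crossing the boundary between columns $n$ and $n+1$, i.e.\ an arm dot passing from $(1,n+1)$ to $(1,n)$; the entry it pushes into column $n+1$ sits in the top row of a column $\le n$ and so equals $1$ or $2$. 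This already proves $(\star)$ for $l\in\{3,\dots,b+1\}$. For $l=2$ one must show that no $2$ crosses this boundary, and here the order of the slides is essential: the dots originally in row $b+1$ are the westmost dots and are slid out before any arm dot, and the point to verify is that they carry the top-row $2$'s of the first $n$ columns down into their own columns, so that by the time an arm dot sweeps leftward along the top row those cells contain $1$'s. The case $\mu_{b+2}>n$, where the strip of dots overflows into the arm, is handled by the same bookkeeping.

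Granting $(\star)$ we obtain $d_i(\partial T)=d_{i-1}(T)$ for $3\le i\le b+2$; by the multiplicity formula (with the $i$th content coordinate of $\partial T$ equal to $\mu_{i-1}$) the multiplicity of $i$ in $\phi(\partial T)$ equals that of $i-1$ in $\phi(T)$, which is exactly the multiplicity of $i$ in $\sigma\phi(T)$, and the cardinality count completes the proof. I expect the genuinely delicate step to be the jeu-de-taquin bookkeeping behind $(\star)$ — in particular the $l=2$ boundary analysis, which forces one to keep track of the order in which the $(b+2)$-dots are slid; the rest of the argument is essentially formal.
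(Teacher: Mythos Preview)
Your approach is essentially the paper's, lightly reframed: your $d_i(S)=n-c_i^S$, so your identity $d_i(\partial T)=d_{i-1}(T)$ is exactly the paper's $c_i^T=c_{i+1}^{\partial T}$, and both of you finish off the value $2$ in $\phi(\partial T)$ (the paper's $f_{b+2}^T$) by the cardinality argument. Your boundary-crossing bookkeeping for $l\ge 3$ is a clean way to say what the paper asserts more tersely (``entries $3\le i\le b+1$ stay in their region''), and for $l=2$ you have correctly isolated the one genuinely nontrivial claim---that after the row-$(b+1)$ dots are processed the surviving top-row cells in columns $\le n$ all hold $1$'s---but you do not actually prove it; the paper supplies precisely this missing piece by showing that the dots under a top-row $1$ slide up-then-west first, after which each dot under a top-row $2$ finds $i-1$ immediately to the left of every $i$ in its column and therefore slides straight north, pulling its $2$ into row $2$.
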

\begin{proof}
Since jeu de taquin promotion permutes the content of $T$, $\vert \phi(\partial(T)) \vert=\vert \phi(T)\vert=\beta(\lambda,\mu)$.  Let $f_i^T$ denote the number of $i$'s in the multiset $\phi(T)=A_T$ and $c_i^T$  the number of $i$'s in the first $n$ columns of $T$. If $\mu_i>n$ then $f_i^T=n- c_i^T$, and if $\mu_i<n$ then $f_i^T=n-c_i^T-(n-\mu_i)=\mu_i-c_i^T$.    Any entry $i$ in $T$ with $3 \leq i \leq b+1$ either belongs to the first $n$ columns below the first row or in the last $m-n$ columns and after jeu de taquin promotion becomes an $i+1$ that belongs to the first $n$ columns below the first row of $\partial(T)$ or in the last $m-n$ columns of $\partial(T)$, respectively, so for  $3 \leq i \leq b+1$, $c_i^T=c_{i+1}^{\partial(T)}$, which yields $f_i^T=f_{i+1}^{\partial(T)}$.   

Entries equal to $2$ belong to either the first or second row of the tableau.  Those below the first row or in the last $m-n$ columns move to boxes below the first row or in the last $m-n$ columns and become $3$'s under jeu de taquin promotion.  Any of the first $n$ columns that contains a $2$ in the first row contains a $b+2$ in the last row.   If there are also $(b+2)$'s in row $b+1$ with $1$'s above them in the first row, these are moved first by jeu de taquin promotion.  Since the top entry in the column  in this case is equal to $1$,  then for some $i$ the $i$th row contains the entry $i$ while the row beneath it contains the entry $i+2$.  Jeu de taquin promotion then commences in the following way, beginning with the leftmost column that contains a $b+2$ in row $b+1$:

$$\begin{ytableau}  1 & 1 \cr  2 & 2 \cr  \vdots & \vdots \cr i & i \cr  \scriptstyle{i+1} & \scriptstyle{i+2} \cr \vdots  & \vdots \cr \sc{b+1} & \bullet \end{ytableau} \rightarrow \begin{ytableau} 1 & 1 \cr 2 & 2 \cr \vdots & \vdots \cr i & i \cr \sc{i+1} & \bullet \cr \vdots & \vdots \cr \sc{b+1} & \sc{b+1} \cr \end{ytableau}
\rightarrow \begin{ytableau} 1 & 1 \cr 2 & 2 \cr \vdots & \vdots \cr i & i \cr \bullet & \sc{i+1} \cr \vdots & \vdots \cr \sc{b+1} & \sc{b+1} \cr \end{ytableau}$$

The jeu de taquin promotion path then moves left across  row $i+1$ to the first column without a dot and north to the first row, replacing a $1$ with a dot. Promotion behaves in the same way for all remaining columns that contain a $b+2$ in the last row and a $1$ in the top row.  For columns that contain $(b+2)$'s in the last row and entries equal to $2$ in the first row, it is now the case that for the leftmost such column, any entry $i$ in the column has an $i-1$ immediately to its left.  Thus, jeu de taquin promotion slides the box in the last row directly to row one, which moves the $2$ into the second row.  Each remaining $b+2$ in row $b+1$ behaves in the same way, sliding each remaining $2$ into the second row.  It follows that $c_2^T=c_3^{\partial(T)}$ so $f_i^T=f_{i+1}^{\partial(T)}$ for $2 \leq i \leq b+1$.

Since $\displaystyle \beta(\lambda,\mu)=\sum_{i=2}^{b+2} f_i^{\partial(T)}=f_2^{\partial(T)}+\sum_{i=2}^{b+1}f_i^T$ and $\displaystyle \beta(\lambda,\mu)=\sum_{i=2}^{b+2}f_i^T=f_{b+2}^T+\sum_{i=2}^{b+1}f_i^T$, we have $f_2^{\partial(T)}=f_{b+2}^T$ and the result follows. \end{proof}

The following lemma shows that if $\lambda=(m,n^b)$ and $T \in SSYT(\lambda,\mu)$  has fixed content $\mu=(\mu_1,\ldots,\mu_{b+2})$, then $T$ is uniquely determined by its free entries.  It follows that the map $\phi:SSYT(\lambda,\mu) \rightarrow \mathcal{A}$ is a bijection.

\begin{lem}\label{lem:unique} Let $\lambda=(m,n^b)$, $\mu=(\mu_1,\ldots,\mu_{b+2})$ and let $T \in SSYT(\lambda,\mu)$.  Then $T$ is uniquely determined by the multiset $A_T$. \end{lem}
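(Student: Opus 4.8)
The plan is to reconstruct the tableau $T$ column by column from the data of its content $\mu$ together with the multiset $A_T$ of free entries, showing at each stage that the semistandard condition leaves no choice. First I would handle the last $m-n$ columns (the ``arm'' of the first row): the forced entries are completely determined by $\mu$ — namely $\gamma_i$ copies of $i$ for each $i$ — and, together with $A_T$, these comprise all entries that must occupy the arm. Since the arm is a single row, semistandardness forces these entries (forced plus free) to appear in weakly increasing order, so the arm is uniquely determined by $A_T$ and $\mu$. By Lemma preceding this one, every entry in row $i\ge 2$ is either $i$ or $i+1$, so row $b+1$ consists of some number of $(b+1)$'s followed by $(b+2)$'s, and the remaining first row entries (columns $1$ through $n$) are $1$'s followed by $2$'s, etc.

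Next I would determine the first $n$ columns. The key observation, already exploited in the paragraph defining $\phi$, is that each of the first $n$ columns of $T$ omits exactly one element of $\{1,\dots,b+2\}$, and the multiset of omitted elements is recovered from $A_T$ by adding back $n-\mu_i$ copies of $i$ for every $i$ with $\mu_i<n$ (and for $i=1$: the number of omitted $1$'s is $n$ minus the number of $1$'s in the first row, which is itself pinned down by counting — $\mu_1$ minus the forced $1$'s in the arm). So from $A_T$ and $\mu$ we know, as a multiset, exactly which value is missing from each of the $n$ columns. Now I would argue that a column of shape $(b+1)$ missing a prescribed value $v\in\{1,\dots,b+2\}$ has a unique semistandard filling with the entries of $\{1,\dots,b+2\}\setminus\{v\}$: the entries must be strictly increasing top to bottom, and there is only one way to list a $(b+1)$-element subset of $\{1,\dots,b+2\}$ in strictly increasing order. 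Finally, the columns themselves must be arranged so that rows are weakly increasing; since in each row the two possible values $i,i+1$ must appear with all $i$'s left of all $i+1$'s, the columns are sorted by their missing value (columns missing a larger value come first, roughly), and I would check that this sorting is forced and consistent across all rows simultaneously. Hence $T$ is uniquely reconstructed.

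The main obstacle I anticipate is the last step: verifying that the per-column reconstructions can be assembled into a globally semistandard tableau in only one way — i.e., that the ordering of the $n$ columns is forced by the joint weakly-increasing condition on all $b+1$ rows at once, with no ambiguity when several columns omit the same value or when the arm interacts with column $n$. One must check that a column omitting value $v$ has, in row $j$, the entry $j$ if $j<v$ and $j+1$ if $j\ge v$ (reading $v$ appropriately), so that ``row $j$ weakly increasing'' for all $j$ is equivalent to the single condition that the missing values of columns $1,\dots,n$ are weakly decreasing; combined with the arm being forced into weakly increasing order and meeting the first row correctly at the boundary between column $n$ and column $n+1$, this yields a unique $T$. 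Once this bookkeeping is done, uniqueness — and hence, since $|SSYT(\lambda,\mu)|$ counts the preimages, injectivity of $\phi$ — follows, and combined with the already-noted surjectivity onto $\mathcal{A}$, $\phi$ is a bijection.
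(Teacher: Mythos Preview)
Your proposal is correct and follows essentially the same route as the paper. The paper's proof is terser: it pads $A_T$ with $n-\mu_i$ copies of $i$ for each $\mu_i<n$ to get an $n$-element multiset $X$, observes that listing $X$ in weakly decreasing order pins down the missing entry of each of the first $n$ columns (hence each column, by taking complements), and then notes that the remaining entries fill the arm in weakly increasing order --- exactly your argument, only without spelling out the per-row check that ``missing values weakly decreasing'' is equivalent to all rows being weakly increasing, and without separating the $i=1$ case (which falls under the same rule, since $1$ can never be a free entry). One small quibble: surjectivity of $\phi$ is not ``already noted'' in the paper prior to this lemma; rather, the reconstruction procedure itself, run on an arbitrary $A\in\mathcal{A}$, is what yields it.
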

\begin{proof}
For each $i$  with $\mu_i<n$, add $n-\mu_i$ elements $i$ to  $A_T$ to produce an $n$-element multiset $X$.  Since $T$ is semistandard, listing the elements of $X$ in weakly decreasing order completely determines the entries in $\{1,\ldots,b+2\}$ that are missing from each of the  first $n$ columns of $T$.  The complement of the $k$th element in the multiset gives the $k$th column of $T$.  The remaining entries in $T$, determined from $\mu$, appear in weakly increasing order in the first row of $T$.
\end{proof}

Since jeu de taquin promotion permutes the content of a tableau, the content of $T$ and $\partial^{b+2}(T)$ are equal so $\partial^{b+2}:SSYT(\lambda,\mu) \rightarrow SSYT(\lambda,\mu)$ for $\lambda=(m,n^b)$ and $\mu=(\mu_1,\ldots,\mu_{b+2})$.    By Lemma \ref{lem:sigma}, if $T \in SSYT(\lambda,\mu)$ and $\sigma=(2,\ldots,b+2)$ then $\phi(\partial^{(b+2)j}(T))=\sigma^{j}\phi(T)$.  Thus $\phi({\partial^{(b+2)(b+1)}(T)})=\sigma^{b+1}\phi(T)=\phi(T)$ so $\partial^{(b+2)(b+1)}(T)=T$.  If $1\leq j <b+1$ satisfies $\partial^{(b+2)j}(T)=T$ for all $T$, then $\sigma^j\phi(T)=\phi(T)$ for all $T$, which is not possible, so have the following Lemma.

\begin{lem} Let $\lambda=(m,n^b)$ and $\mu=(\mu_1,\ldots,\mu_{b+2})$.  The {\em order of promotion} on $SSYT(\lambda,\mu)$ under the cyclic action of $\partial^{b+2}$ is equal to $b+1$. \end{lem}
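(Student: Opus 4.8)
The plan is to push the whole question through the bijection $\phi$ and reduce it to computing the order of a single cyclic permutation. The first step is to note that, since jeu-de-taquin promotion cyclically permutes the content, $\partial^{b+2}$ preserves the content, so it genuinely acts on $SSYT(\lambda,\mu)$ and ``order of promotion under $\langle\partial^{b+2}\rangle$'' is well defined. Then I would iterate Lemma~\ref{lem:sigma} to obtain, for every $T\in SSYT(\lambda,\mu)$ and every $j\ge 0$,
\[
\phi\bigl(\partial^{(b+2)j}(T)\bigr)=\sigma^{j}\,\phi(T),
\]
where $\sigma=(2,3,\ldots,b+2)\in\mathfrak{S}_{b+2}$ is a $(b+1)$-cycle that fixes $1$ and sends $i\mapsto i+1$ for $2\le i\le b+1$.

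For the upper bound I would specialise to $j=b+1$: since $\sigma^{b+1}=\mathrm{id}$, this gives $\phi(\partial^{(b+2)(b+1)}(T))=\phi(T)$ for every $T$, and because $\phi$ is injective (Lemma~\ref{lem:unique}) it follows that $\partial^{(b+2)(b+1)}$ is the identity on $SSYT(\lambda,\mu)$; hence the order of $\partial^{b+2}$ divides $b+1$.

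For the lower bound I would argue by contradiction: if some $j$ with $1\le j\le b$ had $\partial^{(b+2)j}$ fixing every $T$, then $\sigma^{j}\phi(T)=\phi(T)$ for all $T$, and since $\phi$ is onto $\mathcal{A}$ this would force $\sigma^{j}$ to fix every $\beta(\lambda,\mu)$-element multiset of $\{2,\ldots,b+2\}$. But the multiset consisting of $\beta(\lambda,\mu)$ copies of $2$ lies in $\mathcal{A}$, and $\sigma^{j}$ carries it to the multiset of $\beta(\lambda,\mu)$ copies of $2+j$, a different multiset because $1\le j\le b$ --- a contradiction. So no exponent in $\{1,\ldots,b\}$ works, and the order is exactly $b+1$.

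The load-bearing input here is that $\phi$ is a genuine bijection: injectivity (Lemma~\ref{lem:unique}) drives the upper bound, while surjectivity onto $\mathcal{A}$ is what lets the hypothesis ``$\partial^{(b+2)j}$ fixes all of $SSYT(\lambda,\mu)$'' be translated back into ``$\sigma^{j}$ fixes all of $\mathcal{A}$'' in the lower bound. I expect the surjectivity of $\phi$ --- obtained by running the reconstruction in the proof of Lemma~\ref{lem:unique} starting from an arbitrary $\beta(\lambda,\mu)$-element multiset of $\{2,\ldots,b+2\}$ --- to be the only point requiring genuine care. One should also read the statement with the tacit assumption $\beta(\lambda,\mu)\ge 1$: if $\beta(\lambda,\mu)=0$ then $\mathcal{A}$, hence $SSYT(\lambda,\mu)$, has a single element, which is automatically $\partial^{b+2}$-fixed, and the order degenerates to $1$.
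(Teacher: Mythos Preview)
Your argument is correct and follows the same route as the paper: iterate Lemma~\ref{lem:sigma} to get $\phi(\partial^{(b+2)j}(T))=\sigma^{j}\phi(T)$, use $\sigma^{b+1}=\mathrm{id}$ together with injectivity of $\phi$ for the upper bound, and use surjectivity of $\phi$ to produce a multiset not fixed by $\sigma^{j}$ for the lower bound. You are in fact more explicit than the paper on two points---naming the witness multiset $\{2,\ldots,2\}$ and flagging the degenerate case $\beta(\lambda,\mu)=0$---both of which the paper leaves implicit.
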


For a positive integer $n$, let
$\ds [n]_q=\frac{q^{n}-1}{q-1}$ and 
$[n]_q!=[n]_q[n-1]_q \cdots [1]_q$.  The {\em $q$-binomial
coefficients} are defined by
$\ds \left[\begin{array}{c}n \cr k
\cr\end{array}\right]_q=\frac{[n]_q!}{[k]_q![n-k]_q!}.$
 To prove our CSP, we will use the bijection $\phi$  and invoke the following theorem due to Reiner, Stanton and White.

\begin{thm}\label{thm:rsw} (Reiner, Stanton and White \cite{rsw}) Let $X$ be the set of $k$-element multisets of $\{1,2,\ldots,n\}$, let $C=\mathbb{Z}/n\mathbb{Z}$ act on $X$ via the permutation $\theta=(1,2,\ldots,n)$ and let $\displaystyle f(q)=\left[ \begin{array}{c} n+k-1 \cr k \cr \end{array} \right]_q$.  Then $(X,C,f(q))$ exhibits the cyclic sieving phenomenon. \end{thm}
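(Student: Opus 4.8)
Since this is the classical $q$-binomial cyclic sieving phenomenon of Reiner, Stanton and White, which in the sequel we invoke rather than reprove, I only outline the standard argument one would give. Write $f(q)=\left[ \begin{smallmatrix} n+k-1 \\ k \end{smallmatrix} \right]_q$ for the $q$-binomial coefficient in the statement, fix a primitive $n$th root of unity $\omega$ and an integer $d \geq 0$, and set $e=\gcd(n,d)$ and $\ell=n/e$, so that $\theta^d$ has order $\ell$ and $\omega^d$ is a primitive $\ell$th root of unity. The goal is the identity $\vert X^{\theta^d}\vert=f(\omega^d)$, and the plan is to compute both sides explicitly and compare.

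For the left-hand side, identify a $k$-element multiset of $\{1,\ldots,n\}$ with its multiplicity vector $(m_1,\ldots,m_n)$, $m_i\geq 0$, $\sum_i m_i=k$; then $\theta^d$ sends $(m_i)_i$ to $(m_{i-d})_i$ with indices read modulo $n$, so the multiset lies in $X^{\theta^d}$ exactly when $(m_i)$ is constant on the orbits of the ``shift by $d$'' action of $\mathbb{Z}/n\mathbb{Z}$ on itself. There are $e$ such orbits, each of size $\ell$, so a fixed multiset is determined by an $e$-tuple $(c_1,\ldots,c_e)$ of orbit multiplicities with $\ell(c_1+\cdots+c_e)=k$. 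Hence $\vert X^{\theta^d}\vert=0$ unless $\ell\mid k$, in which case it equals the number of weak compositions of $k/\ell$ into $e$ parts, namely $\binom{k/\ell+e-1}{e-1}$.

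For the right-hand side, the plan is to apply the $q$-analogue of Lucas's theorem: for a primitive $\ell$th root of unity $\zeta$ and nonnegative integers $a,b$, $$\left[ \begin{smallmatrix} a \\ b \end{smallmatrix} \right]_{\zeta}=\binom{\lfloor a/\ell\rfloor}{\lfloor b/\ell\rfloor}\left[ \begin{smallmatrix} a\bmod\ell \\ b\bmod\ell \end{smallmatrix} \right]_{\zeta},$$ which itself follows by grouping the $a$ factors of $\prod_{i=0}^{a-1}(1+q^i x)$ into blocks of length $\ell$, using $\prod_{i=0}^{\ell-1}(1+\zeta^i x)=1+(-1)^{\ell+1}x^{\ell}$, and comparing coefficients of $x^b$ in the $q$-binomial theorem (the root-of-unity prefactors that appear simplify to $1$). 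Taking $a=n+k-1$, $b=k$, $\zeta=\omega^d$, and using $\ell\mid n$: the residue $q$-binomial has upper entry $a\bmod\ell=(k-1)\bmod\ell$, which is smaller than its lower entry $k\bmod\ell$ unless $\ell\mid k$, so $f(\omega^d)=0$ in that case; and when $\ell\mid k$ the residue $q$-binomial equals $1$, while $\lfloor a/\ell\rfloor=e+k/\ell-1$ and $\lfloor b/\ell\rfloor=k/\ell$, giving $f(\omega^d)=\binom{e+k/\ell-1}{k/\ell}$. This matches the count above, which finishes the sketch.

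The one genuinely nontrivial ingredient is the $q$-Lucas evaluation together with the bookkeeping of floors and residues modulo $\ell$; everything else is routine. A standard alternative avoids roots of unity altogether: by the representation-theoretic criterion for cyclic sieving, it suffices to check that the reduction of $f(q)$ modulo $q^n-1$ has nonnegative coefficients and that these coefficients record the multiplicities of the irreducible characters of $\mathbb{Z}/n\mathbb{Z}$ in the permutation representation $\mathbb{C}[X]$, which after a short computation again reduces to the same $q$-binomial identities. We will simply cite \cite{rsw} for this statement and apply it, via the bijection $\phi$ established above, in the proof of the main result.
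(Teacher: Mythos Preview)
The paper does not give its own proof of this theorem; it is stated as a result of Reiner, Stanton and White and simply cited from \cite{rsw}, then applied as a black box in the proof of Theorem~\ref{thm:main}. Your proposal explicitly recognizes this and supplies a correct outline of the standard argument: counting $\theta^d$-fixed multisets via orbit multiplicities to get $\binom{e+k/\ell-1}{k/\ell}$ when $\ell\mid k$ and $0$ otherwise, and evaluating the $q$-binomial at a primitive $\ell$th root of unity via the $q$-Lucas theorem to obtain the same expression. The bookkeeping with $a=n+k-1$, $b=k$, $\ell\mid n$ is done correctly, including the observation that $(k-1)\bmod\ell<k\bmod\ell$ forces the residue $q$-binomial to vanish when $\ell\nmid k$. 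Since the paper offers nothing to compare against here, there is no discrepancy to flag; your sketch is sound and, as you say, the result is used rather than reproved.
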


\begin{thm}\label{thm:main} Let $\lambda=(m,n^b)$, $\mu=(\mu_1,\ldots,\mu_{b+2})$, let $C=\mathbb{Z}/(b+1)\mathbb{Z}$ act on $SSYT(\lambda,\mu)$ via $\partial^{b+2}$ and let $f(q)=\left[
\begin{array}{c} b+\beta(\lambda,\mu) \cr \beta(\lambda,\mu) \cr \end{array} \right]_q$ .  Then $(SSYT(\lambda,\mu),C,f(q))$ exhibits the cyclic sieving phenomenon.\end{thm}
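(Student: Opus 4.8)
The plan is to deduce Theorem~\ref{thm:main} from Theorem~\ref{thm:rsw} by transporting the cyclic sieving phenomenon across the bijection $\phi$, using the equivariance recorded in Lemma~\ref{lem:sigma}.

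First I would collect the structural facts already established. By Lemma~\ref{lem:unique}, together with the reverse construction given in its proof (and Lemma~\ref{betalem}, which guarantees that every multiset in $\mathcal{A}$ is actually attained), the map $\phi\colon SSYT(\lambda,\mu)\to\mathcal{A}$ is a bijection, where $\mathcal{A}$ is the set of $\beta(\lambda,\mu)$-element multisets of the $(b+1)$-element set $\{2,\ldots,b+2\}$. Iterating Lemma~\ref{lem:sigma} gives $\phi(\partial^{j}(T))=\sigma^{j}\phi(T)$ for all $j\ge 0$, and since $\sigma$ is a cycle of order $b+1$ we have $\sigma^{b+2}=\sigma$, hence $\phi(\partial^{(b+2)d}(T))=\sigma^{d}\phi(T)$ for every $d\ge 0$ and every $T$. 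In particular $\phi$ is an isomorphism of $C$-sets between $SSYT(\lambda,\mu)$, with $C=\mathbb{Z}/(b+1)\mathbb{Z}$ acting through $\partial^{b+2}$, and $\mathcal{A}$, with $C$ acting through $\sigma$; restricting to fixed points, $\phi$ carries $SSYT(\lambda,\mu)^{(\partial^{b+2})^{d}}$ bijectively onto $\mathcal{A}^{\sigma^{d}}$, so $\bigl|SSYT(\lambda,\mu)^{(\partial^{b+2})^{d}}\bigr|=\bigl|\mathcal{A}^{\sigma^{d}}\bigr|$ for all $d$.

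Next I would apply Theorem~\ref{thm:rsw} with $n$ replaced by $b+1$ and $k$ replaced by $\beta(\lambda,\mu)$: after relabelling $\{2,\ldots,b+2\}$ as $\{1,\ldots,b+1\}$, the cycle $\sigma$ becomes the standard long cycle $\theta$, so $(\mathcal{A},\langle\sigma\rangle,g(q))$ exhibits the CSP with $g(q)=\left[\begin{array}{c}(b+1)+\beta(\lambda,\mu)-1\cr \beta(\lambda,\mu)\cr\end{array}\right]_{q}=\left[\begin{array}{c}b+\beta(\lambda,\mu)\cr \beta(\lambda,\mu)\cr\end{array}\right]_{q}=f(q)$. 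Combining, for each $d\ge 0$ and $\omega$ a primitive $(b+1)$th root of unity we obtain $\bigl|SSYT(\lambda,\mu)^{(\partial^{b+2})^{d}}\bigr|=\bigl|\mathcal{A}^{\sigma^{d}}\bigr|=f(\omega^{d})$, which is precisely the statement that $(SSYT(\lambda,\mu),C,f(q))$ exhibits the cyclic sieving phenomenon.

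The real content of the argument is housed in the preliminary lemmas --- the equivariance of $\phi$ in Lemma~\ref{lem:sigma}, proved by tracking jeu-de-taquin promotion paths, and the bijectivity of $\phi$ in Lemma~\ref{lem:unique}. Given those, the only points requiring care in this proof are bookkeeping ones: that the ground set in Theorem~\ref{thm:rsw} has $b+1$ elements rather than $b+2$; that $\partial^{b+2}$ genuinely induces an action of $\mathbb{Z}/(b+1)\mathbb{Z}$, which is the order-of-promotion lemma; and that the Reiner--Stanton--White $q$-binomial specializes to the stated $f(q)$. I do not anticipate any further obstacle.
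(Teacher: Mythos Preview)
Your proof is correct and follows the same route as the paper: transport the action via $\phi$ (the paper decrements entries to a map $\psi$ into multisets of $\{1,\ldots,b+1\}$, which is exactly your relabelling step), use Lemma~\ref{lem:sigma} iterated together with $\sigma^{b+2}=\sigma$ to obtain equivariance of $\partial^{b+2}$ with the long cycle, and then invoke Theorem~\ref{thm:rsw} with $n=b+1$ and $k=\beta(\lambda,\mu)$. Your explicit mention that Lemma~\ref{betalem} is what makes the inverse construction in Lemma~\ref{lem:unique} land in $SSYT(\lambda,\mu)$, and hence gives surjectivity of $\phi$, is a clarification the paper leaves implicit.
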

\begin{proof} Adjust the map $\phi$ by decrementing each of the entries in $\phi(T)$ to get $\psi:SSYT(\lambda,\mu) \rightarrow \mathcal{B}$, where $\mathcal{B}$ is the set of $\beta(\lambda,\mu)$-element multisets of $\{1,\ldots,b+1\}$.  Let $\theta=(1,2,\ldots,n)$.  By Lemma \ref{lem:sigma}, $\psi(\partial^{b+2}(T))=\theta(\psi(T))$ and $\psi(\partial^{(b+2)j}(T))=\theta^j(\psi(T))$. 

We have $\psi^{(b+2)j}(T)=T$ if and only if $\psi(\partial^{(b+2)j}(T))=\psi(T)$, which, by the above, yields $\theta^j(\psi(T))=\psi(T)$.  But, by Theorem \ref{thm:rsw}, $\vert \mathcal{B}^{\theta^j} \vert=f(\omega^j)$, where $\omega$ is a
 primitive $(b+1)$-th root of unity.  The result now follows.  \end{proof}

\bigskip

We will now examine the relationship between the cyclic sieving polynomial in Theorem \ref{thm:main} and the modified Kostka-Foulkes polynomial $\widetilde{K}_{\lambda,\mu}$.  To do so, we work with plane partitions to get a nice formula for cocharge in the case where $\lambda=(m,n^b)$ and $\mu=(\mu_1,\ldots,\mu_{b+2})$.  

A \textit{plane partition} is an array $\pi = (\pi_{ij})_{i,j \geq 1}$ of nonnegative integers such that $\pi$ has finitely many nonzero entries and is weakly decreasing in rows and columns. If $\sum \pi_{ij} = n$, we write $\vert{\pi}\vert=n$ and say that $\pi$ is a plane partition of $n$.  We can adjust the bijection $\phi$ between $SSYT(\lambda,\mu)$ and the set of $\beta(\lambda,\mu)$-element multisets of $\{2,\ldots,b+2\}$ by subtracting two from each of the entries in $\phi(T)$  and reversing the order to get a bijection between $SSYT(\lambda,\mu)$ and the set of one-row plane partitions $\pi=(\pi_1,\ldots,\pi_{\beta(\lambda,\mu)})$ with $\pi_1\leq b$ and $\beta(\lambda,\mu)$ columns; we will denote the image of $T$ under this bijection by $\pi_T$. 

\bigskip

\begin{thm}\label{thm:cc} Let $\lambda=(m,n^b)$, let $\mu=(\mu_1,\ldots,\mu_{b+2})$ be a partition of $m+nb$ and let $T \in SSYT(\lambda,\mu)$.  Then  $\displaystyle \cc(\rw(T))=\vert \pi_T \vert + n \binom{b+1}{2}$, where
$\vert \pi_T \vert$ is the sum of the entries in $\pi_T$. \end{thm}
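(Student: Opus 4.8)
The plan is to evaluate $\cc(\rw(T))$ directly from the Lascoux--Sch\"utzenberger standard-subword algorithm and to match the result with $\vert\pi_T\vert+n\binom{b+1}{2}$.

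First I would record the block structure of the reading word. Since a row $i\ge 2$ of $T$ uses only the values $i$ and $i+1$, and each of the first $n$ columns is the complement in $\{1,\dots,b+2\}$ of its unique missing value, one gets $\rw(T)=R_{b+1}R_b\cdots R_2R_1$, where $R_i=i^{\,r_i}(i+1)^{\,n-r_i}$ for $2\le i\le b+1$ (with $r_i$ the number of the first $n$ columns whose missing value exceeds $i$), and $R_1=1^{\mu_1}2^{q_2}3^{q_3}\cdots(b+2)^{q_{b+2}}$, where $q_v$ is the number of $v$'s in the first row. After deleting the forced entries, the multiset of missing values is exactly $\phi(T)=A_T$, and $\vert\pi_T\vert=\sum_{v\ge 2}(v-2)f_v$, where $f_v$ is the multiplicity of $v$ in $A_T$.

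Next I would describe the $\mu_1$ standard subwords $u^{(1)},\dots,u^{(\mu_1)}$. Reading across $\rw(T)$, any subword picks at most one entry from each of $R_{b+1},\dots,R_2$, and --- this is where the hypothesis that $\mu$ is a partition is used --- these entries occur in strictly decreasing value order, while the entries the subword takes from $R_1$ occur in strictly increasing value order and always include the value $1$. Hence each subword $u^{(t)}$ of length $\ell_t\ge 2$ is the decreasing arrangement of $[\ell_t]\setminus S_t$ followed by the increasing arrangement of $S_t$, where $S_t\subseteq\{2,\dots,\ell_t\}$ is the set of values $\ge 2$ that $u^{(t)}$ draws from $R_1$; the length-$1$ subwords are single $1$'s. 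For a word of this shape one checks that ``$j$ precedes $j-1$'' holds exactly when $j\notin S_t$, so that
\[
\cc(u^{(t)})=\binom{\ell_t}{2}-\sum_{j\in S_t}(\ell_t+1-j),
\qquad
\cc(\rw(T))=\sum_{t}\binom{\ell_t}{2}-\sum_{t}\sum_{j\in S_t}(\ell_t+1-j).
\]
Because the $\ell_t$ are the parts of the conjugate partition $\mu^t$, the first sum equals $\kappa(\mu)$; the second is computed by determining, for each non-$1$ first-row entry, which subword consumes it and the length of that subword. Carried out, this yields $\cc(\rw(T))=n\binom{b+1}{2}+\vert\pi_T\vert$.

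The main obstacle is exactly this last bookkeeping: one must control the ``loop-around'' steps of the subword algorithm, which occur precisely when the value currently sought has already been exhausted among the rows below the first row, and thereby show that the $\sum_{v\ge 2}\gamma_v$ forced first-row entries account for the constant $n\binom{b+1}{2}$ while the $\beta(\lambda,\mu)$ free entries account for $\vert\pi_T\vert$. I would first dispose of the rectangular-content case $\mu=(n^{b+2})$, where each subword reads one full column from bottom to top and appends that column's missing value $m_j$, so $S_t=\{m_j\}$, $\cc(u^{(t)})=\binom{b+1}{2}+(m_j-2)$, and summing over the $n$ columns gives the statement at once since $\{m_j\}$ is the multiset $A_T$. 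The general case can then be reached by induction on $n$: peel off a standard subword whose cocharge is $\binom{b+1}{2}$ plus a single part of $\pi_T$ and whose removal produces a semistandard tableau of shape $(m-2,(n-1)^b)$ and content $\mu-(1,\dots,1)$, with associated plane partition obtained from $\pi_T$ by deleting that part (the degenerate case $\mu_{b+2}=0$ reduces to a smaller value of $b$).
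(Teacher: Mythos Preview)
Your structural claim that ``any subword picks at most one entry from each of $R_{b+1},\ldots,R_2$'' is false, and with it the V-shape description and the cocharge formula $\cc(u^{(t)})=\binom{\ell_t}{2}-\sum_{j\in S_t}(\ell_t+1-j)$ collapse. Take $\lambda=(5,2,2)$, $\mu=(3,3,2,1)$ and
\[
T=\begin{ytableau}1&1&1&2&3\\2&2\\3&4\end{ytableau},\qquad \rw(T)=34\,22\,11123.
\]
The first standard subword selects the rightmost $1$ (in $R_1$), then $2$ in $R_2$, then $3$ in $R_3$; looking left for $4$ fails, and after wrapping there is no $4$ in $R_1$, so the algorithm picks the $4$ that sits in $R_3$ to the \emph{right} of the already-chosen $3$. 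Thus $u^{(1)}=3421$, which is not of the form ``decreasing block followed by increasing block'', has $S_1=\varnothing$, and has cocharge $5$, whereas your formula gives $\binom{4}{2}=6$. The phenomenon is generic: whenever $a_{v+1}=0$ (row $v{+}1$ contains no $v{+}1$'s) and $q_{v+1}=0$ (no $v{+}1$'s in the arm) while row $v$ still contains both $v$'s and $(v{+}1)$'s, a subword will take two entries from $R_v$. Your argument that $\mu$ being a partition forces the decreasing order overlooks this wrap-around into the right half of an already-visited block. Your rectangular base case is in fact fine (there one checks $q_{v+1}=a_v-a_{v+1}$, so $a_{v+1}=0$ forces $q_{v+1}=a_v>0$), but the very first peel-off in your induction on $n$ already lives in the non-rectangular regime and the claimed cocharge of that subword can be wrong.

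The paper sidesteps this by never computing the cocharge of a whole subword. It works entry by entry: for each occurrence of a value $i$ in $T$ it determines whether, in the subword containing it, $i$ precedes $i{-}1$ or not, and from that reads off the contribution $cc(w,i)$. The upshot is that every entry in row $i\ge 2$ (whether it equals $i$ or $i{+}1$) contributes $i{-}1$, every forced arm entry contributes $0$, and every free arm entry $v$ contributes $v{-}2$; summing gives $n\binom{b+1}{2}+|\pi_T|$ directly. In the counterexample above this correctly assigns contribution $2$ to the $4$ in row $3$ (it pairs as $(3,4)$ with the $3$ in the same row), where your formula would implicitly assign $3$. If you want to rescue your subword-by-subword strategy you would need to allow subwords that hit a single $R_i$ twice and recompute the cocharge of the resulting (not V-shaped) permutations; at that point the bookkeeping is no simpler than the paper's entrywise count.
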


\begin{proof}  
We will consider the contribution of a given entry $i$ in the tableau to the cocharge.   An entry $i$ copies the cocharge contribution of $i-1$ in its subword if $i-1$ precedes $i$ in its subword (an $(i-1,i)$ pairing) and it increases cocharge otherwise (an $(i,i-1)$ pairing).  

If an entry $i$ belongs to row $i$, each of the entries $1, \ldots, i-1$ appear above it in the same column so $i$ pairs with an $i-1$ in a row above it, giving associated subword $w=\cdots i (i-1) \cdots 3 2 1$.  Thus, every entry $i$ in row $i$ contributes $i-1$ to the cocharge.  

Any entry $i$ in row one, where $i \geq 2$, pairs as $(i-1,i)$ in the associated subword so copies the cocharge of $i-1$.  Any subword containing a forced entry consists entirely of forced entries.  Since there are $\mu_1-n$ forced $1$'s, there are $\mu_1-n$ subwords consisting of all forced entries, and these are of the form $w=12\cdots i$, so each forced entry contributes zero to the cocharge.  

If $i-1$ belongs to one of the first $n$ columns, it either belongs to row $i-1$ or to row $i-2$.  If $i$ is a free entry in row one, it cannot pair with a forced entry $i-1$ and, if it pairs with an entry $i-1$ in row $i-2$ or with a free entry $i-1$ in row one, this forces an entry $i$ in row $i-1$. It would then follow that $i-1$ pairs with an entry $i$ in the row beneath it, instead of the $i$ in the first row.  Thus each free entry $i$ in row one pairs with an $i-1$ in row $i-1$, creating an $(i-1,i)$ pairing in the subword so copies the contribution of $i-1$ to the cocharge.  Thus each free entry $i$ in row one contributes $i-2$ to the cocharge.

Finally, we will show that entries $i+1$ in row $i$ contribute $i-1$ to the cocharge.  If $i+1$ pairs with an $i$ in the same row, this yields an $(i,i+1)$ pairing in the subword so $i+1$ contributes the same value to cocharge as $i$, which is $i-1$.  If $i+1$ pairs with a free entry $i$ in row one this yields an $(i+1,i)$ pairing in the subword so that $i+1$ increases the contribution of the free entry, giving a contribution of $i-1$.  The last case is when $i+1$ pairs with an $i$ in row $i-1$ creating an $(i,i-1)$ pairing in the subword so increasing the contribution of $i-1$ by one.  By induction, $i-1$ contributes $i-2$ to the cocharge so $i+1$ contributes $i-1$ to cocharge.  It follows that all entries in row $i$ contribute $i-1$ to the cocharge so
$\ds cc(\rw(T))=\vert \pi_T \vert + n \sum_{i=1}^{b+1} (i-1)=\vert \pi_T \vert + n \binom{b+1}{2}.$ \end{proof}

\begin{cor} Let $\lambda=(m,n^b)$, $\mu=(\mu_1,\ldots,\mu_{b+2})$ and $f(q)=\left[
\begin{array}{c} b+\beta(\lambda,\mu) \cr \beta(\lambda,\mu) \cr \end{array} \right]_q$.  Then $f(q)=q^{-n\binom{b+1}{2}}\widetilde{K}_{\lambda,\mu}(q).$  \end{cor}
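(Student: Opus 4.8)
The plan is to combine Theorem~\ref{thm:cc} with the classical interpretation of the Gaussian binomial coefficient as the generating function for partitions contained in a rectangle. First I would reduce to the case where $\mu$ is a partition: since $\widetilde{K}_{\lambda,\mu}(q)=\widetilde{K}_{\lambda,\sigma\mu}(q)$ for any permutation $\sigma$ of the content, and since $\beta(\lambda,\sigma\mu)=\beta(\lambda,\mu)$ because $\sum_i\gamma_i$ depends only on the multiset of parts of $\mu$, both sides of the asserted identity are unchanged under permuting $\mu$. Hence we may assume $\mu_1\ge\cdots\ge\mu_{b+2}$, so that $\mu$ is a partition of $m+nb$ and $\cc(\rw(T))$ is well defined for every $T\in SSYT(\lambda,\mu)$, which is what Theorem~\ref{thm:cc} requires.

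Next I would apply the cocharge description of the modified Kostka--Foulkes polynomial together with Theorem~\ref{thm:cc}:
$$\widetilde{K}_{\lambda,\mu}(q)=\sum_{T\in SSYT(\lambda,\mu)}q^{\cc(\rw(T))}=\sum_{T\in SSYT(\lambda,\mu)}q^{\,|\pi_T|+n\binom{b+1}{2}}=q^{\,n\binom{b+1}{2}}\sum_{T\in SSYT(\lambda,\mu)}q^{\,|\pi_T|}.$$
It then remains to identify $\sum_T q^{|\pi_T|}$ with $f(q)$. By the discussion preceding Theorem~\ref{thm:cc}, the map $T\mapsto\pi_T$ is a bijection from $SSYT(\lambda,\mu)$ onto the set of one-row plane partitions $\pi=(\pi_1,\ldots,\pi_{\beta(\lambda,\mu)})$ with $\pi_1\le b$, i.e.\ onto the set of ordinary partitions whose Young diagram fits inside a $\beta(\lambda,\mu)\times b$ rectangle, with $|\pi_T|$ the size of the corresponding partition. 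Invoking the standard fact that the generating function for partitions contained in an $a\times b$ box equals $\left[\begin{array}{c} a+b \cr a \cr \end{array}\right]_q$ (e.g.\ via the usual recursion for Gaussian binomials, or a lattice-path argument), we obtain $\sum_T q^{|\pi_T|}=\left[\begin{array}{c} b+\beta(\lambda,\mu) \cr \beta(\lambda,\mu) \cr \end{array}\right]_q=f(q)$, and combining with the display above gives $\widetilde{K}_{\lambda,\mu}(q)=q^{\,n\binom{b+1}{2}}f(q)$, which is the claim.

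I do not expect a genuine obstacle here, since the analytic content is already packaged in Theorem~\ref{thm:cc}; the remaining work is bookkeeping. The two points needing care are: the reduction to partition content, so that the cocharge statistic (and hence Theorem~\ref{thm:cc}) legitimately applies; and the observation that the image of $T\mapsto\pi_T$ is \emph{exactly} the set of partitions fitting in a $\beta(\lambda,\mu)\times b$ box (surjectivity follows from the bijectivity of $\phi$ established in Lemma~\ref{lem:unique}, together with the relabelling that produces $\pi_T$), so that the $q$-binomial generating-function identity can be applied without modification.
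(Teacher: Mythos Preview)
Your argument is correct and follows essentially the same route as the paper: apply Theorem~\ref{thm:cc} to write $\widetilde{K}_{\lambda,\mu}(q)=q^{\,n\binom{b+1}{2}}\sum_T q^{|\pi_T|}$, then identify the sum with the $q$-binomial via the bijection $T\mapsto\pi_T$ onto partitions in a $\beta(\lambda,\mu)\times b$ box. You are slightly more explicit than the paper in justifying the reduction to partition content (so that Theorem~\ref{thm:cc} applies) and in invoking Lemma~\ref{lem:unique} for the bijectivity of $T\mapsto\pi_T$, but this is just added care, not a different approach.
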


\begin{proof}
Denoting $SSYT(\lambda,\mu)$ by $SSYT$, the result follows from Lemma \ref{thm:cc}  since
\begin{eqnarray*}\widetilde{K}_{\lambda,\mu}(q)=\sum_{T\in SSYT} q^{cc(rw(T))}&=&q^{n \binom{b+1}{2}}\sum_{T\in SSYT} q^{\vert \pi_T \vert}\cr
&=&q^{n \binom{b+1}{2}}\sum_{\substack{\pi=(\pi_1,\ldots,\pi_{\beta(\lambda,\mu)}) \\ \pi_1 \leq b}}q^{\vert \pi \vert} = q^{n \binom{b+1}{2}}\left[
\begin{array}{c} b+\beta(\lambda,\mu) \cr \beta(\lambda,\mu) \cr \end{array} \right]_q,\cr \end{eqnarray*}  by \cite[I.3.19]{stanley1} (see also \cite[\S 7.21]{stanleybook}).
\end{proof}
\noindent {\bf Acknowledgement.}  The authors wish to thank two anonymous referees who provided useful suggestions that improved the paper, including an equivalent variation on our map between tableaux and multisets.
\begin{bibdiv}

\begin{biblist}
\bib{alexander}{article}{author={P. Alexandersson},title={\em{Free action and cyclic sieving on skew semi-standard Young tableaux}}, journal={\em{Bull. Iran. Math. Soc.}}, volume={49}, issue={6}, year={2023}}

\bib{linusson}{article}{author={P. Alexandersson}, author={E. Kantarci O\u{g}uz}, author={S. Linusson}, title={\em{Promotion and cyclic sieving on families of SSYT}}, journal={\em{Ark. Mat.}}, year={2021}, volume={59}, pages={247--274}}

\bib{bms}{article}{author={M. Bennett}, author={B. Madill}, author={A. Stokke}, title={\em{Jeu-de-taquin promotion and a cyclic sieving phenomenon for semistandard hook tableaux}}, journal={\em{Discrete Math.}}, year={2014}, volume={319}, pages={62--67}}

\bib{butler}{book}{author={L. M. Butler}, title={Subgroup lattices and symmetric functions}, publisher={Amer. Math. Soc.}, place={Providence}, year={1994}}

\bib{fontaine}{article}{author={B. Fontaine}, author={J. Kamnitzer}, title={\em{Cyclic sieving, rotation, and geometric representation theory}}, journal={\em{Sel. Math. New Ser.}}, volume={20}, year={2013}, pages={609--625}}

\bib{foulkes}{collection.article}{author={H.O. Foulkes}, title={\em{A survey of some combinatorial aspects of symmetric functions}}, booktitle={In: \em{Permutations}}, publisher={Gauthier-Villars}, place={Paris}, year={1974}, pages={79-92}}

\bib{gaetz}{article}{author={C. Gaetz}, author={O. Pechenik}, author={J. Striker}, author={J.P. Swanson}, title={\em{Curious cyclic sieving on increasing tableaux}}, journal={\em{Enumer. Comb. Appl.}}, volume={2}, number={3}, article={S2R18}, year={2022}}

\bib{haiman}{article}{author={M. D. Haiman}, title={\em{Dual equivalence with applications, including a conjecture of Proctor}},  journal={\em{Discrete Math.}}, volume={99}, year={1992}, pages={79--113}}

\bib{lascoux}{article}{author={A. Lascoux}, author={M.-P. Sch\"{u}tzenberger}, title={\em{Sur une conjecture de H.O. Foulkes}}, journal={\em{C.R. Acad. Sc. Paris}}, volume={286A}, year={1978}, pages={323--324}}

\bib{ohpark1}{article}{author={Y.-T. Oh}, author={E. Park}, title={\em{Crystals, semistandard tableaux and cyclic sieving phenomenon}}, journal={\em{Electron. J. Comb.}}, volume={26}, year={2019}} 

\bib{ohpark2}{article}{author={Y.-T. Oh }, author={E. Park}, title = {\em{q-dimensions of highest weight crystals and cyclic sieving phenomenon}},
journal = {\em{European J. of Combin.}},
volume = {97},
pages = {103372},
year = {2021}}

\bib{pechenik}{article}{author={O. Pechenik}, title={\em{Cyclic sieving of increasing tableaux and small Schr\"oder paths}}, journal={\em{J. Combin. Theory, Ser. A}}, volume={125}, year={2014}, pages={357--378}}

\bib{ponwang}{article}{author={S. Pon}, author={Q. Wang}, title={\em{Promotion and evacuation on standard
    Young tableaux of rectangle and staircase shape}}, journal={\em{Electron. J. Combin.}}, volume={18}, year={2011}}

\bib{psv}{article}{author={T. Pressey}, author={A. Stokke}, author={T. Visentin}, title={\em{Increasing tableaux, Narayana numbers and an instance of the cyclic sieving phenomenon}}, journal={\em{Ann. Comb.}}, volume={20}, year={2016}, pages={609--621}}

\bib{rsw}{article}{author={V. Reiner}, author={D. Stanton},
author={D. White}, title={\em{The cyclic sieving phenomenon}},
journal={\em{J. Combin. Theory Ser. A}}, volume={108}, year={2004},
pages={17--50}}

\bib{rhoades}{article}{author={B. Rhoades}, title={\em{Cyclic sieving,
promotion, and representation theory}}, journal={\em{J. Combin. Theory
Ser.~A}}, volume={117}, year={2010}, pages={38--76}}

\bib{sagan}{book}{author={B. Sagan}, title={The cyclic sieving phenomenon: a survey},
 series={Surveys in
Combinatorics 2011, London Math. Soc. Lecture Note Series},
volume={392}, publisher={Cambridge University Press},
place={Cambridge}, year={2011}, pages={183--234}}

\bib{schutz1}{article}{author={M.-P. Sch\"{u}tzenberger},
    title={\em{Quelques remarques sur une construction de Schensted}},
    journal={\em{Canad. J. Math.}}, volume={13}, year={1961},
    pages={117--128}}

    \bib{schutz2}{article}{author={ M.-P.
    Sch\"{u}tzenberger},title={\em{Promotion des morphismes d'ensembles
    ordonn\'{e}s}}, journal={\em{Discrete Math.}}, volume={2}, year={1972},
    pages={93--74}}

\bib{stanley1}{book}{author={R. Stanley}, title={Enumerative Combinatorics}, volume={1}, edition={2}, publisher={Cambridge Univ. Press}, place={Cambridge}, year={2011}}

   \bib{stanleybook}{book}{author={ R. Stanley}, title={Enumerative
    Combinatorics}, volume={2}, publisher={Cambridge Univ. Press},
    place={Cambridge}, year={1997}}

\end{biblist}

\end{bibdiv}

\end{document}